\newtheorem{lemma}{Lemma}[section]
\newtheorem{theorem}[lemma]{Theorem}
\newtheorem{proposition}[lemma]{Proposition}
\newtheorem{corollary}[lemma]{Corollary}
\theoremstyle{definition} 
\newtheorem{definitionnodiamond}[lemma]{Definition}
\newtheorem{examplenodiamond}[lemma]{Example}
\newtheorem{remarknodiamond}[lemma]{Remark}
\numberwithin{equation}{section}
\crefname{section}{Section}{Sections}
\crefname{subsection}{}{Subsections}
\crefname{definition}{Definition}{Definitions}
\crefname{example}{Example}{Examples}
\crefname{examplenodiamond}{Example}{Examples}
\crefname{remark}{Remark}{Remarks}
\crefname{remarknodiamond}{Remark}{Remarks}
\crefname{convention}{Convention}{Conventions}
\crefname{lemma}{Lemma}{Lemmas}
\crefname{proposition}{Proposition}{Propositions}
\crefname{corollary}{Corollary}{Corollaries}
\crefname{theorem}{Theorem}{Theorems}
\crefname{assumption}{Assumption}{Assumptions}
\crefname{equation}{}{}
\crefname{align}{}{}
\crefname{proofstep}{Step}{Steps}
\def\GL{{\rm GL}}
\def\Kdim{{\rm Kdim}}
\def\Proj{{\sf Proj}}
\def\Qcoh{{\sf Qcoh}}
\def\PP{{\mathbb P}}
\newcommand\cL{\mathcal L}
\newcommand\cO{\mathcal O}
\newcommand\fp{\mathfrak p}
\newcommand{\bwedge}{\mbox{\Large $\wedge$}}
\def\a{\alpha}
\def\g{\gamma}
\def\l{\lambda}
\def\s{\sigma}
\numberwithin{equation}{section}
\begin{document}

\author{Alex Chirvasitu, S. Paul Smith and Michaela Vancliff}

\address{Department of Mathematics, University at Buffalo, Buffalo, NY 14260-2900, USA}
\email{achirvas@buffalo.edu}
\address{Department of Mathematics, Box 354350, Univ.\ of Washington, Seattle, WA 98195,
USA.}
\email{smith@math.washington.edu} 
\address{Department of Mathematics, Box 19408, Univ.\ of Texas at Arlington, 
Arlington, TX 76019-0408, USA.}
\email{vancliff@uta.edu}

\keywords{ subspaces, $4 \times 4$ matrices, Grassmannian, line scheme, minors,
non-commutative analogues of $\PP^3$}

\subjclass[2010]{15A72}

\begin{abstract} 
Let $R$ denote a 6-dimensional subspace of the ring $M_4(\Bbbk)$ of $4 \times 4$ matrices 
over an algebraically closed field~$\Bbbk$. Fix a vector space isomorphism $M_4(\Bbbk) \cong \Bbbk^4 \otimes \Bbbk^4$.
We associate to~$R$ a closed subscheme~${\mathbf X}_R$ of the Grassmannian of 
2-dimensional subspaces of~$\Bbbk^4$, where the reduced subscheme of ${\mathbf X}_R$ 
is the set of 2-dimensional subspaces $Q \subseteq \Bbbk^4$ such that 
$(Q \otimes \Bbbk^4) \cap R \ne \{ 0\}$.  Our main result is that if 
${\mathbf X}_R$ has minimal dimension (namely, one), then its degree is 20 when it is viewed as a subscheme of~$\PP^5$ via the Pl\"ucker embedding.

We present several examples of $\mathbf X_R$ that illustrate the wide range of 
possibilities for it; there are reduced and non-reduced examples. 
Two examples involve 
elliptic curves: in one case, ${\mathbf X}_R$ is a $\PP^1$-bundle over an elliptic curve, the second symmetric power of the curve; 
in the other, it is a curve having seven irreducible components, three of which are quartic 
elliptic space curves, and four of which are smooth plane conics. These two examples arise naturally from a problem having its roots in 
quantum statistical mechanics.

The scheme~$\mathbf X_R$ appears in non-commutative 
algebraic geometry: under appropriate hypotheses, it is isomorphic to the 
line scheme $\cL$ of a certain graded algebra determined by~$R$. 
In that context, it has been an open question for 
several years to describe such~$\cL$ of minimal dimension, i.e., 
those~$\cL$ of dimension one.  Our main result implies 
that if $\dim(\cL) = 1$, then, as a subscheme of~$\PP^5$ under the 
Pl\"ucker embedding, $\deg(\cL) = 20$.
\end{abstract}
\title[$6$-dimensional subspaces of $4\times 4$ matrices]{A geometric invariant of \\  $6$-dimensional subspaces of $4\times 4$ matrices}

\maketitle


\section{Introduction}

\subsection{}
Our main result,  \cref{thm.main} below, requires quite a bit of notation so we begin with an approximate version 
of it that will allow us to discuss some of its applications and interpretations in this introduction.

\begin{theorem}
[Approximate version of \cref{thm.main}]
Let $V$ and $V'$ be 4-dimensional vector spaces over an algebraically closed field~$\Bbbk$ and let $R$~be a 6-dimensional subspace of $V \otimes V'$. 
Let ${\mathbf X}_R$~be the closed subscheme of the Grassmannian ${\sf G}(2,V)$ of 2-dimensional subspaces of~$V$ whose reduced locus, 
$({\mathbf X}_R)_{\rm red}$, is the set of 2-dimensional subspaces $Q \subseteq V$ such that $(Q \otimes V') \cap R \ne \{0\}$.  
Then every irreducible component of ${\mathbf X}_R$ has dimension $\ge 1$ and if $\dim({\mathbf X}_R)=1$, then 
the degree of ${\mathbf X}_R$ is 20 when it is viewed as a subscheme of the projective space $\PP^5$ via the Pl\"ucker embedding.
\end{theorem}

\subsection{Linear algebra problems}
\label{sect.lin.alg.qus}
Fix a positive integer $d$. Let $R$ be a $d$-dimensional subspace of a tensor product $V_1 \otimes \cdots \otimes V_n$ of finite-dimensional vector spaces. 
What invariants can one attach to~$R$ to distinguish one such~$R$ from another? 
We do not address this very big problem but the special case in this paper 
suggests that geometric invariants of the kind we consider might be rich and computable invariants. Our cursory examination of the literature suggests 
these are new invariants. A more thorough examination of them is likely to be fruitful and interesting.

\subsection{Subspaces of matrices}
In the setting of our main result, consider the case $V'=V^*$. Since $V \otimes V^*$ is naturally isomorphic to the space of 
linear transformations $V \to V$, i.e., to the matrix algebra $M_4(\Bbbk)$, our main result applies to 6-dimensional subspaces of $M_4(\Bbbk)$. Of course,
our result addresses only a special case of a large set of questions about subspaces of matrices. The remarks in \S\ref{sect.lin.alg.qus} apply here too.

\subsection{Invariants of non-commutative algebras}
\label{sect.ATV}
Associative algebras are often presented in terms of generators and relations. Frequently, such a presentation is unenlightening.
For example, it can be very difficult to decide if one such algebra is isomorphic to another. Furthermore, such a presentation is often a subtle invitation to calculate,
and calculate some more, without understanding the fundamental reason why 
various results are true. 

In order to focus the discussion, 
consider the case of an algebra $A=TV/(R)$ where $TV$~denotes the tensor algebra 
on a vector space~$V$ and $R$~denotes a subspace of $V \otimes V$. One can 
present~$A$ by choosing a basis for~$V$ and a basis for~$R$ that is written in 
terms of the basis for~$V$. Following an idea due to Artin-Tate-Van den Bergh 
(see \cite{ATV1,ATV2}) it is often very useful to view elements of~$R$ as 
bi-homogeneous forms on the projective variety $\PP(V^*) \times \PP(V^*)$ and to 
then consider the scheme-theoretic zero locus, $\Gamma \subseteq \PP(V^*) \times \PP(V^*)$, of~$R$. 

For the quadratic algebras considered in \cite{ATV1,ATV2} (now called 
quadratic Artin-Schelter regular algebras of global dimension three, which 
have the property that $\dim(V)=\dim(R)=3$), 
$\Gamma$~determines $R$ in the sense that $R$~is the largest subspace 
of $V \otimes V$ vanishing on $\Gamma$.  In this way, the relations that 
define~$A$ have geometric meaning. Furthermore, in those examples, $\Gamma$~is 
either the graph of an automorphism of $\PP(V^*) = \PP^2$, or the graph of an 
automorphism of a cubic divisor in  $\PP(V^*)=\PP^2$. This interpretation of the 
relations for~$A$ is, in some sense, the key to unlocking the fine structure of 
the algebras considered in \cite{ATV1,ATV2}.

\subsubsection{}
\label{sect.YR}
By the results in \cite{ATV1,ATV2}, the scheme $\Gamma$ has a description like 
that of~${\bf X}_R$. Let $V$~be a 3-dimensional vector space and $R$~a 3-dimensional subspace of $V \otimes V$. Let ${\bf Y}_R$ be the closed subscheme of 
${\sf G}(2,V) \cong \PP^2$ consisting of the 2-dimensional subspaces 
$Q \subseteq V$ such that $(Q \otimes V) \cap R \ne \{0\}$. Every irreducible 
component of ${\bf Y}_R$ has dimension $\ge 1$ and if $\dim({\bf Y}_R)=1$,
then $\deg({\bf Y}_R)=3$. Moreover, if $TV/(R)$ is Artin-Schelter regular, then
${\bf Y}_R \cong \Gamma$. 

\subsubsection{}
If  $R=\{x \otimes y-y \otimes x \; | \; x,y \in V\}$, i.e., if $R$ is the space of skew-symmetric tensors in $V \otimes V$, then ${\mathbf X}_R={\sf G}(2,V)$; in this case
$TV/(R)$ is a polynomial ring. More generally, if $\theta \in \GL(V)$ and $R=\{x \otimes y-y \otimes \theta(x) \; | \; x,y \in V\}$, 
then ${\mathbf X}_R={\sf G}(2,V)$.

\subsubsection{}
In the thirty years since the appearance of \cite{ATV1,ATV2}, the idea of 
associating $\Gamma$ to $A$  has played a central role in the 
study of algebras of the form $TV/(R)$ and in the study of the non-commutative 
algebraic varieties having such algebras as homogeneous coordinate rings.

\subsection{Non-commutative algebraic geometry}
We became interested in the question that our main theorem answers as a result of our interest in non-commutative algebraic geometry (NCAG).
We will now say a few words about that although, in the interests of simplicity,
we will often shade the truth.

One theme in NCAG is the examination of non-commutative analogues of the projective spaces $\PP^n$. Given such a non-commutative variety, 
which we will denote by $\PP_{nc}$ for now, a basic and fruitful question has been the determination of the ``points''  and ``lines'' in  $\PP_{nc}$, and their incidence 
relations. The classification of points and lines in $\PP_{nc}$ can be formulated as moduli problems and there are moduli spaces for each 
that are called the {\it point scheme} and {\it line scheme}, respectively. The
point scheme first appeared in \cite{ATV1,ATV2}, 
by  Artin-Tate-Van den Bergh, and the line scheme first appeared in  
\cite{SV1,SV2}, by Shelton and Vancliff. 

The spaces $\PP_{nc}$ are usually defined in terms of an algebra that plays the 
role of a homogeneous coordinate ring. If $\PP_{nc}$ has a homogeneous coordinate 
ring that is one of the 
quadratic Artin-Schelter regular algebras of global dimension three
discussed in \S\ref{sect.ATV}, 
then the point scheme for $\PP_{nc}$ is the space $\Gamma$ defined in 
\S\ref{sect.ATV}, which is, up to isomorphism, the scheme ${\bf Y}_R$ 
in \S\ref{sect.YR}; moreover, the line scheme for those algebras is $\PP^2$.

\subsubsection{The line scheme in greater generality}
\label{sect.line.scheme.general}
Let $V$ be a 4-dimensional vector space, $R$ a 6-dimensional subspace of $V
\otimes V$, and $A=TV/(R)$. Suppose further (and the uninitiated reader can 
ignore this sentence which is only here for reasons of intellectual honesty) 
that $A$~is 
an Artin-Schelter regular $\Bbbk$-algebra (see \cite[Defn. 8.1]{StVdB01}, for example)
and a domain of global dimension four with Hilbert series $(1-t)^{-4}$; crudely, $A$~is a good non-commutative analogue of the 
polynomial ring on four variables.\footnote{Artin-Tate-Van den Bergh showed that  noetherian Artin-Schelter regular algebra of global dimension 4 and Hilbert series $(1-t)^{-4}$ is a domain \cite[Thm. 3.9]{ATV2}.} 

The conditions on $A$ are a little stronger than Conditions 2.1-2.3 and 2.8 in Shelton and Vancliff's paper \cite{SV1}
where it is shown that under those hypotheses the line scheme for~$A$ (or, rather, for the non-commutative analogue 
of~$\PP^3$ that has $A$~as a homogeneous coordinate ring) is isomorphic to 
${\mathbf X}_R$. (In \cite{SV1}, ${\mathbf X}_R$ is denoted by $\cL^\perp$; see  \cite[p.585]{SV1}.) 

The point scheme parametrizes isomorphism classes of point modules and the line scheme parametrizes
 isomorphism classes of   line modules. 
 Point (resp., line) modules are, by definition, the cyclic graded right $A$-modules having Hilbert series
 $(1-t)^{-1}$ (resp., $(1-t)^{-2}$). 

If $A=SV$, the symmetric algebra on a 4-dimensional vector space $V$, then the line modules are the modules of the form $A/A\ell^\perp$
where $\ell$ runs over the lines in $\Proj(A)=\PP(V^*) \cong \PP^3$ and $\ell^\perp$ is the subspace of $V$ vanishing on $\ell$. 
When $A$ has the properties in the first paragraph of \S\ref{sect.line.scheme.general}, a right $A$-module $M$ is a line module if  
and only if it is of the form $A/LA$ where $L$ is a 2-dimensional subspace of $V$ such that 
$L \otimes V \cap R \ne \{0\}$ \cite[Prop. 2.8]{LS93}; i.e., if and only if $L \in ({\bf X}_R)_{\rm red}$. 
Notice too that ${\mathbf X}_R$ is isomorphic to the subscheme  of $\PP(R)\subseteq \PP(V\otimes V)$  consisting of the tensors
that have rank $\le 2$.
%
%

A basic problem in NCAG is to 
describe the line scheme of algebras like $TV/(R)$, especially when that line scheme has dimension one because that is believed to be the generic case. 
The accumulating evidence has suggested that the degree of the line scheme (when it is viewed as a subscheme of the ambient Pl\"ucker~$\PP^5$) is 20 if its 
dimension is  one (\cite{CV15,CS2,TV16}).  
 
Our main result settles the matter. 
Theorem~\ref{thm.main} states, in part, that if ${\mathbf X}_R$~has minimal dimension (namely, one), then it has degree 20 
 as a subscheme of the ambient Pl\"ucker~$\PP^5$.  
  
\subsubsection{}
A bewildering variety of 
Artin-Schelter regular algebras of global dimension four with Hilbert 
series $(1-t)^{-4}$ have appeared over the past 20~years.
Perhaps their line schemes will exhibit a similar bewildering variety. If not, their 
geometric similarities will suggest hidden algebraic similarities.

\subsection{}
The contents of the paper are as follows. 

In \S\ref{prelim} we introduce some notation and state 
the main result in Theorem~\ref{thm.main}. In \S\ref{proof} we almost prove the main theorem---the first example in \S\ref{egs} is needed to
complete its proof.  In~\S\ref{egs}, we present four examples, the first of which
completes the proof of Theorem~\ref{thm.main} and Corollary~\ref{thm.main.2}. 
The examples in~\S\ref{egs} illustrate how widely the structure 
of~${\mathbf X}_R$ can vary as $R$~varies.
For example, the first~${\mathbf X}_R$ in~\S\ref{egs} is reduced, but the second
is not.  The examples in \S\S\ref{Skly.example} and \ref{exotic.Skly.example} involve 
elliptic curves: in the former, ${\mathbf X}_R$ is a $\PP^1$-bundle over an elliptic curve;
in the latter, it is a curve having seven irreducible components, three of which are 
elliptic curves, and four of which are smooth conics.   The example in
\S\ref{Skly.example} appears in the literature in the context of quantum statistical physics
via solutions to the quantum Yang-Baxter equation (\cite{OF89,Skl82,Skl83}).
We end in \S\ref{sect.motive} by saying a little  more about the connection with non-commutative algebraic geometry.


\subsection*{Acknowledgements}
\label{sec:acknowledgements}
We would like to thank an anonymous referee for useful comments on an earlier draft of the paper. 

A.C. was partially supported by NSF grant DMS-1565226.

M.V. was partially supported by NSF grant DMS-1302050.

\bigskip
\medskip


\section{Preliminaries}
\label{prelim}

In this section we state the main theorem and establish notation that will be used 
in its proof.
Throughout the paper, $\Bbbk$ denotes an algebraically closed field. For the rest of the paper,   $V$ denotes a 4-dimensional $\Bbbk$-vector space. 

\subsection{}
We identify the space of 
$4\times 4$ matrices over $\Bbbk$ with $V \otimes V^*$ but, since we never use the 
multiplicative structure of the matrix ring, we often replace $V^*$ by $V$.
We write ${\sf G}(2,V)$ for the Grassmannian of 2-planes in~$V$, and 
${\sf G}(8,V^{\otimes 2})$ for the Grassmannian of 8-dimensional subspaces of  
$V \otimes V$, and we identify ${\sf  G}(2,V)$ with its image in ${\sf G}(8,V^{\otimes 2})$
under the closed immersion ${\sf  G}(2,V) \longrightarrow {\sf G}(8,V^{\otimes 2})$, 
where $Q \mapsto Q \otimes V$. 

\subsection{}
Let $R$ denote a 6-dimensional subspace of $V \otimes V$.  The set
$$
{\bf S}_R \; :=\; \{W \in {\sf G}(8,V^{\otimes 2}) \; | \; W \cap R \ne \{0\}\,\}
$$
 is a closed subvariety of $ {\sf G}(8,V^{\otimes 2})$; indeed, it is a special Schubert 
 variety by~\cite[p.~146]{F-YT}.
The geometric invariant that we associate to~$R$ is the scheme
$$
{\mathbf X}_R \; :=\; {\bf S}_R \cap {\sf  G}(2,V),
$$
where the scheme-theoretic intersection is taken inside $ {\sf G}(8,V^{\otimes 2})$.  
The reduced variety of $\mathbf X_R$ is
$$
({\mathbf X}_R)_{\rm red} \; :=  \;  \{Q  \in {\sf G}(2,V) \; | \;  (Q \otimes V) \cap R 
\ne \{0\}\,\} \; \subseteq \; {\sf G}(2,V).
$$

\subsection{}
Our main result is

\begin{theorem}
[\Cref{pr.U_flat,thm.main.2}]
\label{thm.main}
Let $R \in \mathsf G(6, V^{\otimes 2})$. 
\begin{enumerate}
  \item 
  Every irreducible component of ${\mathbf X}_R$ has dimension $\ge 1$
  \item 
   $\{\mathbf X_R \ | \ R \in \mathsf G(6, V^{\otimes 2}), \  \dim(\mathbf X_R) = 1 \}$ is a flat family.
  \item 
  If char$(\Bbbk) \neq 2$ and if  $\dim({\mathbf X}_R)=1$, then 
$\deg({\mathbf X}_R) =20$, where  $\mathbf X_R$ is viewed as a subscheme of $\PP^5$ via
the Pl\"ucker embedding ${\sf G}(2,V) \to \PP(\bwedge^2V) \cong \PP^5$ 
$($cf.~\S\ref{P.emb}$)$.
\end{enumerate}
\end{theorem}

\bigskip
 

\section{The proof of \Cref{thm.main}}
\label{proof}

In this section, we complete most of the proof of Theorem~\ref{thm.main}.
Our approach is to consider ${\mathbf X}_R$ as a member of a flat family of schemes and to show that the degree of each fiber is constant across the whole family. The example in \S\ref{gsca} exhibits a fiber having degree 20 so all fibers must have degree twenty.

\subsection{}
We will examine various families over the base scheme ${\sf G}(6,V^{\otimes 2})$. 
 Define 
\begin{align*}
{\bf G}&\; := \; {\sf G}(6,V^{\otimes 2})\times {\sf G}(8,V^{\otimes 2}),
\\
{\bf G}(1,3)&\; := \; {\sf G}(6,V^{\otimes 2})\times {\sf G}(2,V),    \quad \hbox{which we identify with its image in ${\bf G}$ under the closed} 
\\
&   \phantom{     \; := \; {\sf G}(6,V^{\otimes 2})\times {\sf G}(2,V), x \quad}  \hbox{immersion $(W,Q) \mapsto (W,Q \otimes V)$,}
\\ 
{\bf S}_R & \; := \; \{W \in {\sf G}(8,V^{\otimes 2}) \; | \; W \cap R \ne \{0\}\,\},
 \\ 
{\bf S} & \; := \;  \{(R,x)   \; | \; R \in  {\sf G}(6,V^{\otimes 2}), \, x \in {\mathbf S}_R\} \;  \subseteq  \;  {\bf G},
\\ 
  {\mathbf X}_R  & \; := \;  {\bf S}_R \cap {\sf G}(2,V), \quad \hbox{the scheme-theoretic intersection 
taken inside ${\sf G}(8,V^{\otimes 2})$}, 
 \\
 {\mathbf X} & \; := \; {\bf S} \cap {\bf G}(1,3), \quad \hbox{the scheme-theoretic intersection taken inside ${\bf G}(1,3) \subseteq   {\bf G}$},
\\
&\;  \phantom{:} =\; \{(R,x) \; | \; R \in  {\sf G}(6,V^{\otimes 2}), \, x \in {\mathbf X}_R\} \;  \subseteq  \;  {\bf G}(1,3) \subseteq    {\bf G} ,
\\ 
U& \; := \; \{R \in {\sf G}(6,V^{\otimes 2}) \; | \; \dim({\mathbf X}_R )=1\} \; \subseteq \; {\sf G}(6,V^{\otimes 2}),
\\ 
{\mathbf X}_U  & \; := \; \{(R,x)   \; | \; \dim({\mathbf X}_R)=1, \, x \in {\mathbf
X}_R\} \; \subseteq \; {\mathbf X}. 
\end{align*}
and  
\begin{align*}
\pi &: {\bf G} \longrightarrow {\sf G}(6,V^{\otimes 2}), \quad \hbox{the projection,}
\\ 
f& : {\mathbf X} \longrightarrow  {\sf G}(6,V^{\otimes 2}), \quad \hbox{the
restriction of  $ \pi$ to ${\mathbf X}$}.  
\end{align*}
Thus ${\mathbf X}_U =f^{-1}(U)=\pi^{-1}(U) \cap {\mathbf X}$.  
In general, we denote the restriction of a family to a subscheme $U\subseteq {\sf G}(6,V^{\otimes 2})$ by the subscript $U$.


\begin{proposition}\label{pr.U_open}
  The set $U$ is a dense open subset of ${\sf G}(6,V^{\otimes 2})$.
\end{proposition}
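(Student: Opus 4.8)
The plan is to study the projective morphism $f:{\bf X}\to{\sf G}(6,V^{\otimes 2})$, whose fibre over $R$ is supported on ${\bf X}_R$, and to extract the statement from two facts: upper semicontinuity of fibre dimension, which gives openness, and a computation of the generic fibre dimension, which gives density. Throughout I use that ${\sf G}(6,V^{\otimes 2})$ is irreducible of dimension $6\cdot 10=60$ and ${\sf G}(2,V)$ is irreducible of dimension $4$, and that $f$ is proper because ${\bf X}$ is closed in ${\sf G}(6,V^{\otimes 2})\times{\sf G}(2,V)$ and the second factor is complete. Since an open subset of the irreducible variety ${\sf G}(6,V^{\otimes 2})$ is dense as soon as it is nonempty, it suffices to prove openness and to exhibit one $R$ with $\dim({\bf X}_R)=1$.

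First I would compute $\dim({\bf X})$ using the \emph{other} projection $p:{\bf X}\to{\sf G}(2,V)$. For fixed $Q$, the fibre $p^{-1}(Q)$ is the set of $R$ with $R\cap(Q\otimes V)\neq 0$; since $\dim(Q\otimes V)=8$, this is exactly the special Schubert variety ${\bf S}_{Q\otimes V}$ in ${\sf G}(6,V^{\otimes 2})$, irreducible of codimension $16-6-8+1=3$, hence of dimension $57$. Because $\GL(V)$ acts transitively on ${\sf G}(2,V)$ and carries the subspaces $Q\otimes V$ to one another, $p$ is a Zariski-locally-trivial fibration with irreducible fibre over the irreducible base ${\sf G}(2,V)$, so ${\bf X}$ is irreducible of dimension $4+57=61$.

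Next I would prove two pointwise facts about the fibres of $f$. For nonemptiness, observe that ${\bf X}_R\neq\emptyset$ if and only if $R$ contains a nonzero tensor of rank $\le 2$: a $2$-plane $Q$ with $(Q\otimes V)\cap R\neq 0$ exists exactly when some nonzero $t\in R$ lies in $Q\otimes V$, i.e. has image contained in a $2$-plane. The rank-$\le 2$ locus in $\PP(V^{\otimes 2})=\PP^{15}$ is a projective variety of dimension $11$, while $\PP(R)$ has dimension $5$; as $5+11\ge 15$, the projective dimension theorem forces them to meet, so ${\bf X}_R\neq\emptyset$ for every $R$ and $f$ is surjective. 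For the lower bound I would realize ${\bf X}_R$ as a degeneracy locus on ${\sf G}(2,V)$: with $\cS\subset V\otimes\cO$ the tautological rank-$2$ subbundle, the composite $\phi_R:R\otimes\cO\to V^{\otimes 2}\otimes\cO\to (V/\cS)\otimes V$ of a rank-$6$ bundle into a rank-$8$ bundle fails to be injective exactly along ${\bf X}_R$ (the kernel at $Q$ is $R\cap(Q\otimes V)$). The standard bound on determinantal loci gives $\codim({\bf X}_R)\le(6-5)(8-5)=3$ at every point, so $\dim({\bf X}_R)\ge 4-3=1$ whenever ${\bf X}_R\neq\emptyset$, which by the previous sentence is always.

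Finally I would assemble these. Properness of $f$ makes $R\mapsto\dim({\bf X}_R)$ upper semicontinuous, so $\{R:\dim({\bf X}_R)\le 1\}$ is open; combined with the uniform bound $\dim({\bf X}_R)\ge 1$ this set equals $U$, proving $U$ open. Since $f$ is surjective and hence dominant between irreducible varieties, the general fibre has dimension $\dim({\bf X})-\dim({\sf G}(6,V^{\otimes 2}))=61-60=1$, so $U$ contains a nonempty open set and is dense. I expect the main obstacle to be the uniform lower bound: one must identify the scheme-theoretic intersection defining ${\bf X}_R$ with the degeneracy scheme of $\phi_R$ and apply the determinantal codimension estimate uniformly in $R$. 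Verifying local triviality, and hence irreducibility, of the Schubert fibration $p$ is a second, more routine, point to check with care.
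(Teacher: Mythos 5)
Your proof is correct, and while the openness half coincides with the paper's argument, the remaining steps take a genuinely different and more self-contained route. For openness, both you and the paper use upper semicontinuity of fibre dimension together with closedness of the projective morphism $f:{\bf X}\to{\sf G}(6,V^{\otimes 2})$ (the paper invokes \cite[Exercise II.3.22(d)]{Hart}). For the uniform bound $\dim({\bf X}_R)\ge 1$, the paper works globally in the product: ${\bf S}$ has codimension $3$ in ${\bf G}={\sf G}(6,V^{\otimes 2})\times{\sf G}(8,V^{\otimes 2})$ and ${\bf G}(1,3)$ has codimension $\dim{\sf G}(8,V^{\otimes 2})-4$, so every irreducible component of ${\bf X}={\bf S}\cap{\bf G}(1,3)$ has relative dimension $\ge 1$ over the base; your fibrewise degeneracy-locus bound $(6-5)(8-5)=3$ on ${\sf G}(2,V)$ is the same determinantal principle, localized to each fibre. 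A point in your favour: the paper's identity $U={\sf G}(6,V^{\otimes 2})\setminus\pi({\bf Y})$ tacitly assumes every fibre ${\bf X}_R$ is nonempty (background imported from \cite{SV1}), whereas your rank-$\le 2$ argument ($11+5\ge 15$ in $\PP^{15}$, then $t=a\otimes b+c\otimes d\in Q\otimes V$ with $Q\supseteq{\rm span}(a,c)$) proves surjectivity of $f$ outright. The real divergence is density: the paper simply cites \cite{SV2} for the existence of a single $R$ with $\dim({\bf X}_R)=1$, while you manufacture it internally by computing $\dim({\bf X})=4+57=61$ via the second projection --- a locally trivial fibration with irreducible Schubert fibres of codimension $16-8-6+1=3$ over the homogeneous base ${\sf G}(2,V)$, which also yields irreducibility of ${\bf X}$ --- and then applying the generic-fibre-dimension theorem to the surjection onto the $60$-dimensional Grassmannian. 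What each buys: the paper's route is shorter but outsources nonemptiness of $U$ to an explicit example elsewhere; yours is fully self-contained at the cost of verifying local triviality (via local sections of $\GL(V)\to{\sf G}(2,V)$ over the big cells) and irreducibility, which you correctly flag as the points needing care. Your other worry, matching the scheme-theoretic intersection ${\bf S}_R\cap{\sf G}(2,V)$ with the degeneracy scheme of $\phi_R$, is harmless for this proposition: every assertion concerns only dimensions, which depend only on the underlying reduced scheme.
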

\begin{proof}
Each ${\bf S}_R$ is a Schubert variety and, by~\cite[pp.~193-196]{GH78}, for
example, its codimension in ${\sf G}(8,V^{\otimes 2})$ is three. It follows that 
the codimension of ${\bf S}$ in ${\bf G}$ is three. Since $\dim({\sf G}(2,V))=4$, the 
codimension of ${\bf G}(1,3)$ in ${\bf G}$ is
$\dim({\sf G}(8,V^{\otimes 2}))-4$. It follows that every irreducible 
component~${\mathbf X}_i$, $i\in I$, 
of ${\bf S} \cap {\bf G}(1,3)={\mathbf X}$ has codimension 
$\le \dim({\sf G}(8,V^{\otimes 2}))-1$. 
In other words, the relative dimension, 
$\dim({\mathbf X}_i)-\dim({\sf G}(6,V^{\otimes 2}))$, is at least $1$ for each $i\in I$. 

We will now apply~\cite[Exercise II.3.22(d)]{Hart} to the families obtained by
restricting the projection $\pi$ to the reduced
 subschemes  $({\mathbf X}_i)_{\rm red}$ (and co-restricting to the scheme-theoretic images of these
restrictions). The conclusion of the cited exercise is that ${\bf Y}:=\{x\in {\mathbf
X} \; | \; \dim({\mathbf X}_{\pi(x)}) \ge 2\}$
is a closed subscheme of ${\mathbf X}$. Since $\pi|_{\mathbf X}$ is a projective morphism, it is 
closed.  
Since $U = {\sf G}(6,V^{\otimes 2})- \pi({\bf Y})$, we find $U$~is open.
By~\cite{SV2}, $U$~is nonempty, and therefore dense.
\end{proof}

The following result is embedded in the proof of \Cref{pr.U_open}.

\begin{proposition}
Let $R \in {\sf G}(6,V^{\otimes 2})$. If $\dim({\mathbf X}_R)=1$, then every
irreducible component of ${\mathbf X}_R$ has dimension 1.
\end{proposition}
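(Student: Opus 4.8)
The plan is to prove the contrapositive-flavored statement directly: I would show that \emph{every} irreducible component of ${\bf X}_R$ has dimension $\ge 1$. Once that is established, the hypothesis $\dim({\bf X}_R)=1$ forbids any component of dimension $\ge 2$, so every component must have dimension exactly $1$. The required lower bound is precisely the fibrewise incarnation of the codimension estimate already used in the proof of \Cref{pr.U_open}, and I would extract it by presenting ${\bf X}_R$ locally on ${\sf G}(2,V)$ as a determinantal scheme.

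Concretely, I would work in an affine chart of ${\sf G}(2,V)$ over which the universal $2$-plane $Q\subseteq V$ is trivialized, so that $W=Q\otimes V$ is an $8$-dimensional subspace of $V^{\otimes 2}$ varying algebraically with the chart coordinates. Composing the fixed inclusion $R\hookrightarrow V^{\otimes 2}$ with the projection $V^{\otimes 2}\twoheadrightarrow V^{\otimes 2}/W$ produces a homomorphism $R\to V^{\otimes 2}/W$, i.e. a $6\times 8$ matrix $A$ whose entries are regular functions on the chart. By construction $(Q\otimes V)\cap R\ne 0$ exactly when this map fails to be injective, that is, when $\rank A\le 5$; hence on this chart ${\bf X}_R$ is the zero locus of the $6\times 6$ minors of $A$.

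Then I would invoke the classical codimension bound for determinantal loci: for an $m\times n$ matrix, the locus where the rank is $\le r$ has every irreducible component of codimension at most $(m-r)(n-r)$, whenever the defining ideal of minors is proper. With $m=6$, $n=8$, $r=5$ this gives $(6-5)(8-5)=3$, so each component of ${\bf X}_R$ has codimension $\le 3$ in ${\sf G}(2,V)$ and therefore dimension $\ge\dim {\sf G}(2,V)-3=1$. The hypothesis that ${\bf X}_R$ is a proper (one-dimensional) subvariety guarantees that the ideal of minors is proper on the charts meeting ${\bf X}_R$, so the bound applies to every component. This is the same relative-dimension estimate obtained for ${\bf X}$ in \Cref{pr.U_open}, now read off one fibre at a time.

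The only delicate point is the book-keeping for the local determinantal presentation: one must verify that the chart trivialization makes the entries of $A$ regular functions and that the condition $\rank A\le 5$ genuinely cuts out ${\bf X}_R$ rather than a larger or smaller locus. Since the statement concerns only the dimensions of irreducible components, the reduced, set-theoretic description suffices, so no subtle questions about scheme structure enter, and the argument reduces to the cited determinantal inequality applied chart by chart.
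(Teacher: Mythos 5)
Your argument is correct, and it takes a genuinely different route from the paper. The paper obtains the lower bound on component dimensions globally: it cites the fact that ${\bf S}_R$ is a special Schubert variety of codimension $3$ in ${\sf G}(8,V^{\otimes 2})$ and then applies the intersection-dimension inequality in the (smooth) ambient Grassmannian --- in fact it does this in the relative setting, bounding the codimension of every component of ${\bf X}={\bf S}\cap{\bf G}(1,3)$ inside ${\bf G}$ in the proof of \Cref{pr.U_open}, so that the fibrewise statement is then ``embedded'' there via the standard fibre-dimension bookkeeping of \cite[Exercise II.3.22]{Hart}. You instead fix $R$, trivialize over a standard chart of ${\sf G}(2,V)$ (note that on the chart where $Q$ projects isomorphically onto ${\rm span}(e_1,e_2)$, the fixed complement ${\rm span}(e_3,e_4)\otimes V$ works uniformly, so the entries of your $6\times 8$ matrix $A$ are indeed polynomial in the chart coordinates), present $({\bf X}_R)_{\rm red}$ as the rank~$\le 5$ locus of $A$, and invoke the Eagon--Northcott height bound $(6-5)(8-5)=3$ to get $\dim\ge 4-3=1$ for every component; properness of the ideal of minors on any chart meeting ${\bf X}_R$ is automatic since the vanishing locus there is nonempty. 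This is legitimate: since the statement concerns only dimensions of components, the reduced structure suffices, as you say. In effect you re-prove locally the codimension-$3$ fact that the paper imports from the theory of Schubert varieties; what your approach buys is a self-contained, elementary, and directly fibrewise argument that avoids both the ambient ${\sf G}(8,V^{\otimes 2})$ and the relative-dimension semicontinuity step, while the paper's global setup buys more --- the same Schubert-variety machinery simultaneously feeds the openness of $U$, and later the Cohen--Macaulayness and flatness arguments, so the paper gets this proposition essentially for free along the way.
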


\begin{proposition}
\label{prop.flat.family}
Let $G$ be an algebraic group acting on $\PP^n$ and let $T$~be an integral noetherian
scheme endowed with a transitive action of $G$. Suppose $G$ acts diagonally on $T \times
\PP^n=\PP^n_T$. If  ${\mathbf X} \subseteq \PP^n_{T}$ is a $G$-stable 
closed subscheme, then ${\mathbf X}$ is flat over $T$.
\end{proposition}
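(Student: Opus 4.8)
The plan is to reduce flatness to the constancy of the Hilbert polynomial of the fibers, and then to read off that constancy from the transitivity of the $G$-action. Since $T$ is integral and noetherian and ${\bf X}\subseteq\PP^n_T$ is a closed subscheme, Hartshorne's criterion \cite[Theorem III.9.9]{Hart} says that ${\bf X}$ is flat over $T$ if and only if the Hilbert polynomial $P_t$ of the fiber ${\bf X}_t\subseteq\PP^n_{k(t)}$ is independent of $t\in T$. Thus the entire proof reduces to showing that $t\mapsto P_t$ is constant.

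First I would record how the diagonal action moves fibers. Writing the action of $g\in G$ on $\PP^n_T=T\times\PP^n$ as $(t,p)\mapsto(g_T\,t,\,g_\PP\,p)$ with $g_T\in\mathrm{Aut}(T)$ and $g_\PP\in\mathrm{Aut}(\PP^n)=\mathrm{PGL}_{n+1}$, the hypothesis that ${\bf X}$ is $G$-stable means $g$ restricts to an automorphism of ${\bf X}$ lying over $g_T$. Restricting to the slice over a point then identifies ${\bf X}_{g_T t}$ with $g_\PP({\bf X}_t)$ as closed subschemes of $\PP^n$. Because $g_\PP$ is an automorphism of $\PP^n$ it preserves $\cO(1)$, hence preserves the Euler characteristics of all twists, so ${\bf X}_{g_T t}$ and ${\bf X}_t$ have the same Hilbert polynomial. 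This homogeneity of fibers is the crux of the argument.

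Next I would use transitivity to globalize. For closed points this is immediate: $G$ acts transitively on closed points, so any two closed fibers are related by some $g$ as above and therefore share a Hilbert polynomial. To upgrade to all scheme-theoretic points I would pass to geometric fibers, using that the Hilbert polynomial is unchanged under extension of the residue field (it is computed by an Euler characteristic, which is stable under flat base change), so $P_t=P_{\bar t}$ for a geometric point $\bar t$ over $t$; transitivity over a common algebraically closed overfield then produces, after a harmless field extension, an element $g$ carrying one geometric point to the other, whence equality of the geometric Hilbert polynomials. Combining, $P_t$ takes a single value on all of $T$, and \cite[Theorem III.9.9]{Hart} delivers flatness.

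The step I expect to be the main obstacle is this last one's bookkeeping: ``transitivity'' properly means that the scheme map $G\times T\to T\times T$, $(g,t)\mapsto(g_T t,t)$, is surjective, and converting this into an actual group element relating two \emph{scheme} points requires descending to geometric points and permitting an auxiliary extension of the base field. If one is content to work with a homogeneous $T$ over an algebraically closed field—as in the intended application, where $T$ is a single $G$-orbit and all closed points have the same residue field—then this collapses to the transparent statement about closed points and the technicalities evaporate. An essentially equivalent route avoids Hilbert polynomials altogether: generic flatness gives flatness over a dense open $V\subseteq T$; the flatness locus in $T$ is open, being the complement of the image under the projective (hence closed) morphism $f$ of the non-flat locus in ${\bf X}$; that locus is $G$-stable by the same equivariance of $f$ used above; and a nonempty $G$-stable open subset of a transitively-acted-on integral scheme is all of $T$. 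I would adopt whichever formulation fits the surrounding text best, but in either case the homogeneity-of-fibers observation is the common engine.
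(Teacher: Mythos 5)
Your proposal is correct and follows essentially the same route as the paper: the paper's proof likewise invokes transitivity to move any fiber ${\bf X}_t$ to ${\bf X}_{t'}$ by a $g\in G$ acting compatibly on $\PP^n_{k(t)}$, concludes the Hilbert polynomials agree, and cites \cite[Thm.~III.9.9]{Hart}. Your extra bookkeeping about non-closed points (and the alternative via generic flatness) is more careful than the paper's two-line argument, which simply asserts $t'=g(t)$ for arbitrary $t,t'\in T$, but it is the same engine.
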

\begin{proof}
Let $t,t'\in T$. There exists $g \in G$ such that $t'=g(t)$. The action of $g$ is such that the diagram
$$
\xymatrix{
{\mathbf X}_t \ar[d]_g \ar[r] & \PP_{\Bbbk(t)}^n \ar[d]^g
\\
{\mathbf X}_{t'}  \ar[r] & \PP_{\Bbbk(t')}^n 
}
$$
commutes. It follows that ${\mathbf X}_t$ and ${\mathbf X}_{t'}$ have the same Hilbert polynomial. 
It now follows from~\cite[Thm. III.9.9]{Hart} that ${\mathbf X}$~is flat over~$T$. 
\end{proof}

\begin{corollary}
\label{cor.S.flat}
The scheme $\bf S$ is flat over ${\sf G}(6,V^{\otimes 2})$.
\end{corollary}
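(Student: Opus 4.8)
The plan is to obtain this at once from \Cref{prop.flat.family}, applied to the full linear group of the ambient $16$-dimensional space. Put $G := \GL(V^{\otimes 2})$ and $T := {\sf G}(6, V^{\otimes 2})$. The tautological action of $G$ on $V^{\otimes 2}$ induces actions on both Grassmannians ${\sf G}(6, V^{\otimes 2})$ and ${\sf G}(8, V^{\otimes 2})$, hence a diagonal action on $\mathbf{G}$. The first point to record is that $G$ acts transitively on $T$, since any two $6$-dimensional subspaces of $V^{\otimes 2}$ are interchanged by a suitable element of $\GL(V^{\otimes 2})$; and $T$, being a Grassmannian over $k$, is integral and noetherian. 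Thus the hypotheses \Cref{prop.flat.family} places on the base are met.

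Next I would realize the $G$-action on the fibre Grassmannian ${\sf G}(8, V^{\otimes 2})$ as a linear action on a projective space. The Pl\"ucker embedding
\[
{\sf G}(8, V^{\otimes 2}) \hookrightarrow \PP(\Lambda^{8} V^{\otimes 2}) =: \PP^n
\]
is equivariant for the representation of $G$ on $\Lambda^{8} V^{\otimes 2}$, so $G$ acts linearly on $\PP^n$ and the inclusion $\mathbf{G} \hookrightarrow T \times \PP^n = \PP^n_T$ is $G$-equivariant.

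It then remains to check that $\mathbf{S}$ is a $G$-stable closed subscheme of $\PP^n_T$. It is closed: the incidence condition $W \cap R \ne 0$ is exactly the locus where the tautological bundle map $W \oplus R \to V^{\otimes 2}$ over $\mathbf{G}$ fails to be injective, a degeneracy locus and so closed in $\mathbf{G}$, which in turn is closed in $\PP^n_T$. It is $G$-stable because $g(W) \cap g(R) = g(W \cap R)$ for every $g \in G$, so $W \cap R \ne 0$ if and only if $g(W) \cap g(R) \ne 0$, giving $g \cdot \mathbf{S} = \mathbf{S}$. With $T$ integral noetherian, $G$ transitive on $T$ and linear on $\PP^n$, and $\mathbf{S} \subseteq \PP^n_T$ a $G$-stable closed subscheme, \Cref{prop.flat.family} applies and yields that $\mathbf{S}$ is flat over $T = {\sf G}(6, V^{\otimes 2})$.

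I anticipate no real obstacle, as this is a clean specialization of \Cref{prop.flat.family}; the only steps deserving care are the equivariance of the Pl\"ucker embedding---needed so that the abstract diagonal action on $\mathbf{G}$ is genuinely induced by a linear action on the $\PP^n$ demanded by the proposition---and the identity $g(W \cap R) = g(W) \cap g(R)$ underlying stability. I would stress that the argument succeeds over the \emph{entire} base precisely because $\GL(V^{\otimes 2})$ disregards the tensor structure and therefore acts transitively on $T$; the sharper flatness of $\mathbf{X}_U$ over $U$ in \Cref{thm.main} lies deeper, since there one must respect the factorization $V^{\otimes 2} = V \otimes V$ and the pertinent group $\GL(V)$ no longer acts transitively on the base.
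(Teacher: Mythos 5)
Your proposal is correct and follows essentially the same route as the paper: both apply \Cref{prop.flat.family} with $G=\GL(V^{\otimes 2})$ acting diagonally on ${\sf G}(6,V^{\otimes 2})\times\PP(\wedge^8(V^{\otimes 2}))$ via the Pl\"ucker embedding, using transitivity of $G$ on the base and $G$-stability of $\bf S$. The only difference is that you spell out details the paper leaves as ``clear''---the equivariance of the Pl\"ucker embedding, closedness of $\bf S$ as a degeneracy locus, and the identity $g(W\cap R)=g(W)\cap g(R)$---which is a faithful elaboration rather than a different argument.
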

\begin{proof}
Let  ${\sf G}(8,V^{\otimes 2}) \to \PP(\bwedge^8(V\otimes V))$ be the Pl\"ucker embedding 
and consider $\bf S$ as a closed subscheme of ${\sf G}(6,V^{\otimes 2}) \times  
\PP(\bwedge^8(V\otimes V))$. 
Let $\GL(V\otimes V)$ act diagonally on the previous product in the 
obvious way. It is clear that the closed subscheme $\bf S$ is stable under the action of 
$\GL(V\otimes V)$ and that the action of $\GL(V\otimes V)$ on ${\sf G}(6,V^{\otimes 2})$ 
is transitive. The result now follows from \Cref{prop.flat.family}.
\end{proof}

\begin{lemma}
Both ${\mathbf X}_U $ and ${\bf S}$ are Cohen-Macaulay schemes.
\end{lemma}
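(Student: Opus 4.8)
The plan is to exhibit both $\mathbf S$ and $\mathbf X_U$ as determinantal degeneracy loci, of the expected codimension, of a single morphism of vector bundles, and then to invoke generic perfection---the acyclicity of the Eagon--Northcott complex---to conclude that they are Cohen--Macaulay. Concretely, on $\mathsf G(8,V^{\otimes 2})$ let $\mathcal W\subset V^{\otimes 2}\otimes\mathcal O$ denote the tautological rank-$8$ subbundle and $\mathcal Q:=(V^{\otimes 2}\otimes\mathcal O)/\mathcal W$ the rank-$8$ quotient, and on $\mathsf G(6,V^{\otimes 2})$ let $\mathcal R\subset V^{\otimes 2}\otimes\mathcal O$ be the tautological rank-$6$ subbundle. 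Pulling these back to $\mathbf G$ and composing, one obtains $\psi\colon\mathcal R\to\mathcal Q$, whose fibre at $(R,W)$ is the composite $R\hookrightarrow V^{\otimes 2}\twoheadrightarrow V^{\otimes 2}/W$. Thus $\psi_{(R,W)}$ fails to be injective precisely when $W\cap R\ne 0$, so $\mathbf S$ is the locus $D_5(\psi)$ where the $6\times 6$ (maximal) minors of $\psi$ vanish; its expected codimension in $\mathbf G$ is $(6-5)(8-5)=3$.

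For $\mathbf S$ I would first invoke the proof of \Cref{pr.U_open}, where it is shown that $\mathbf S$ has codimension exactly $3$ in the smooth scheme $\mathbf G$. Since $\psi$ has ranks $6\le 8$ and $\mathbf S=D_5(\psi)$ is cut out by its maximal minors, the equality of actual and expected codimension makes the Eagon--Northcott complex a finite locally free resolution of $\mathcal O_{\mathbf S}$; as $\mathbf G$ is smooth this gives that $\mathbf S$ is Cohen--Macaulay. Alternatively, by \Cref{cor.S.flat} the family $\mathbf S\to\mathsf G(6,V^{\otimes 2})$ is flat over a smooth, hence Cohen--Macaulay, base, and each fibre $\mathbf S_R$ is a special Schubert variety, which is Cohen--Macaulay; the standard transfer of the Cohen--Macaulay property along a flat morphism with Cohen--Macaulay base and fibres then applies.

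For $\mathbf X_U$ I would restrict $\psi$ along the closed immersion $\mathbf G(1,3)\hookrightarrow\mathbf G$, $Q\mapsto Q\otimes V$, and over the open locus $U$, obtaining a map $\psi'\colon\mathcal R\to\mathcal Q$ of the same ranks on the smooth scheme $\mathbf G(1,3)_U=U\times\mathsf G(2,V)$ with $\mathbf X_U=D_5(\psi')$. Two things must be checked. First, that the scheme-theoretic intersection defining $\mathbf X$ carries the determinantal structure: because the ideal sheaf of $\mathbf S$ in $\mathbf G$ is generated by the $6\times 6$ minors of $\psi$, its pullback to $\mathbf G(1,3)_U$ is generated by the $6\times 6$ minors of $\psi'$, which is exactly the ideal of $D_5(\psi')$. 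Second, that the codimension is again the expected value $3$: the Proposition immediately following \Cref{pr.U_open} shows that for $R\in U$ every component of the fibre $\mathbf X_R$ has dimension $1$, so by the fibre-dimension theorem every component of $\mathbf X_U$ has codimension at least $3$ in $\mathbf G(1,3)_U$, while the general bound that a nonempty degeneracy locus has codimension at most its expected value forces equality. Generic perfection then yields that $\mathbf X_U$ is Cohen--Macaulay.

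I expect the main obstacle to be the codimension bookkeeping for $\mathbf X_U$ rather than the homological input, which is entirely formal once the codimension is known. The delicate points are the identification of the scheme-theoretic intersection with the determinantal locus $D_5(\psi')$, and the verification that no component of $\mathbf X_U$ drops into codimension $<3$; for the latter the key ingredient is the purity of the fibres supplied by the Proposition following \Cref{pr.U_open}. Granting these, the Eagon--Northcott resolution applies verbatim on the smooth ambient schemes $\mathbf G$ and $\mathbf G(1,3)_U$, and both $\mathbf S$ and $\mathbf X_U$ are Cohen--Macaulay.
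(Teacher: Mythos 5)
Your proposal is correct, but it reaches the conclusion by a genuinely different mechanism than the paper. The paper treats the two schemes by separate arguments: for $\mathbf{S}$ it combines flatness over $\mathsf{G}(6,V^{\otimes 2})$ (\Cref{cor.S.flat}) with Hochster's theorem that the fibres $\mathbf{S}_R$, being Schubert varieties, are Cohen--Macaulay, and transfers the property along the flat morphism using the local criterion (CM base and CM closed fibre imply CM total ring); for $\mathbf{X}_U$ it cuts $\mathbf{X}_U$ out of the already-CM scheme $\mathbf{S}_U$ by $n=\dim\mathsf{G}(8,V^{\otimes 2})-4$ local equations supplied by the regular, hence locally complete intersection, embedding $\mathbf{G}(1,3)\subseteq\mathbf{G}$, verifies that over $U$ the codimension of this ideal is exactly $n$ (the same fibre-dimension count you make), and invokes the standard fact that an ideal of codimension $n$ generated by $n$ elements in a CM local ring has CM quotient. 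Your second argument for $\mathbf{S}$ is precisely the paper's; your primary route---realizing both $\mathbf{S}$ and $\mathbf{X}_U$ as maximal-minor degeneracy loci $D_5(\psi)$ and $D_5(\psi')$ of expected codimension $3$ and applying Eagon--Northcott generic perfection on the smooth ambients $\mathbf{G}$ and $\mathbf{G}(1,3)_U$---is different and has the virtue of handling both schemes by one uniform device, with the bonus of an explicit finite locally free resolution. The codimension bookkeeping you do (codimension $\ge 3$ from fibre dimension $1$ over $U$, codimension $\le 3$ from the general degeneracy-locus bound) is sound. The one point you correctly flag as delicate is also the real cost of your route: to identify the scheme-theoretic intersection $\mathbf{X}=\mathbf{S}\cap\mathbf{G}(1,3)$ with $D_5(\psi')$ you must know that the ideal sheaf of $\mathbf{S}$, with the structure the paper uses, is generated by the $6\times 6$ minors of $\psi$. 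This is true but deserves a sentence: $D_5(\psi)$ is CM of expected codimension, while the deeper locus $D_4(\psi)=\{(R,W):\dim(R\cap W)\ge 2\}$ has codimension $2(16-6-8+2)=8$ in $\mathbf{G}$, so $D_5(\psi)$ is smooth, in particular reduced, at the generic point of each of its components; CM together with generic reducedness gives reducedness, so $D_5(\psi)$ coincides with $\mathbf{S}$. The paper's argument sidesteps this entirely because it never needs equations for $\mathbf{S}$, only Hochster's theorem and flatness.
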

\begin{proof}
Let $\pi:{\bf S} \to {\sf G}(6,V^{\otimes 2})$ be the restriction of the projection. 
In~\cite{Hoch73}, 
Hochster proved that Schubert varieties in Grassmannians are  Cohen-Macaulay.
Thus,
${\bf S}_R$ is Cohen-Macaulay for all $R \in {\sf G}(6,V^{\otimes 2})$.  Let $x \in {\bf S}$.
By \Cref{cor.S.flat}, the natural map $u:\cO_{\pi(x),{\sf G}(6,V^{\otimes 2})} \to \cO_{x,{\bf S}}$  is a
flat homomorphism of noetherian local rings.
 The closed fiber of~$u$, which is $\cO_{x,{\bf S}_{\pi(x)}}$, is Cohen-Macaulay. Since $\cO_{\pi(x),{\sf G}(6,V^{\otimes 2})}$ is 
 also Cohen-Macaulay,  \cite[Cor.~23.3]{Mats89}~implies that $\cO_{x,{\bf S}}$~is Cohen-Macaulay. 
Thus ${\bf S}$ is Cohen-Macaulay.

The Cohen-Macaulay property for ${\mathbf X}_U $ will follow from \cite[Prop.~18.13]{Ebud95} 
applied to the following setup.

Recall that ${\mathbf X}_U = {\bf S}_U\cap {\bf G}(1,3)_U$. 
Let $A=\cO_{x,{\mathbf S}_U }$ be the local ring at a point $x\in {\mathbf X}_U $. 
Since ${\bf G}(1,3)_U$ is regular and hence a local complete intersection, we can find 
$n=\dim({\sf G}(8,V^{\otimes 2}))-4$ generators for the ideal~$I$ of~$A$ such that 
$\cO_{x,{\bf X}_U}=A/I$. 


Since we are restricting our families to~$U$, the codimension of~$I$ in~$A$ is
precisely~$n$, but then the hypotheses of~\cite[Prop.~18.13]{Ebud95} are met, and so the 
ring~$A/I$ is Cohen-Macaulay.  
\end{proof}

\begin{theorem}
\label{pr.U_flat}
The scheme ${\mathbf X}_U $ is flat over $U$. 
\end{theorem}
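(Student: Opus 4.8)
The plan is to deduce the flatness of $f\colon {\bf X}_U \to U$ from the local criterion of flatness over a regular base with a Cohen--Macaulay total space (``miracle flatness''): a local homomorphism $A \to B$ of Noetherian local rings with $A$ regular and $B$ Cohen--Macaulay is flat as soon as $\dim B = \dim A + \dim(B/\mathfrak{m}_A B)$ (see, e.g., \cite[Thm.~23.1]{Mats89}). Two of the three hypotheses are already in hand: ${\bf X}_U$ is Cohen--Macaulay by the preceding lemma, and $U$ is regular, being an open subscheme of the smooth Grassmannian ${\sf G}(6,V^{\otimes 2})$ by \Cref{pr.U_open}. What remains is to verify, at each point $x \in {\bf X}_U$ with image $y = f(x)$ corresponding to a subspace $R=y$, the dimension equality $\dim \cO_{x,{\bf X}_U} = \dim \cO_{y,U} + \dim \cO_{x,{\bf X}_R}$, where ${\bf X}_R$ is the fibre of $f$ over $y$.

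The heart of the matter is a dimension count showing that ${\bf X}_U$ is equidimensional of dimension $\dim U + 1$. For the lower bound I would reuse the computation in the proof of \Cref{pr.U_open}: every irreducible component of ${\bf X}$ has relative dimension at least $1$ over ${\sf G}(6,V^{\otimes 2})$, hence dimension at least $\dim {\sf G}(6,V^{\otimes 2}) + 1 = \dim U + 1$; since ${\bf X}_U = f^{-1}(U)$ is open in ${\bf X}$, the same lower bound holds for each of its components. For the upper bound I would invoke the restriction to $U$: by the Proposition preceding this theorem, every fibre ${\bf X}_R$ with $R \in U$ is purely $1$-dimensional, so the fibres of $f$ all have dimension exactly $1$, and the theorem on the dimension of fibres of a finite-type morphism gives $\dim {\bf X}_U \le \dim U + 1$. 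Combining the two bounds, every irreducible component of ${\bf X}_U$ has dimension exactly $\dim U + 1$.

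Granting this, the required pointwise equality follows from a standard dimension-formula argument and need not be ground out in detail: since ${\bf X}_U$ is Cohen--Macaulay (hence catenary) and equidimensional, $\dim \cO_{x,{\bf X}_U} = \dim {\bf X}_U - \dim \overline{\{x\}}$, while the regularity and irreducibility of $U$ give $\dim \cO_{y,U} = \dim U - \dim \overline{\{y\}}$; feeding in the purity of the fibre and the fibre-dimension theorem applied to $\overline{\{x\}} \to \overline{\{y\}}$ yields the equality $\dim \cO_{x,{\bf X}_U} = \dim \cO_{y,U} + \dim \cO_{x,{\bf X}_R}$. Equivalently, one may simply invoke the global form of miracle flatness for a Cohen--Macaulay scheme mapping to a regular scheme with equidimensional fibres of the expected dimension. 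Either way, flatness of $f$ at every point follows, i.e., ${\bf X}_U$ is flat over $U$.

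I expect the main obstacle to be the upper bound in the dimension count, and more precisely the clean separation of roles played by the open set $U$. The lower bound $\ge 1$ on relative dimension holds over all of ${\sf G}(6,V^{\otimes 2})$, but the matching upper bound---and hence equidimensionality of the total space---is exactly what fails off $U$, where fibres jump to dimension $\ge 2$. This is also why the group-theoretic argument of \Cref{prop.flat.family} and \Cref{cor.S.flat}, which produced flatness of ${\bf S}$ over the whole homogeneous base, does not apply here: $U$ is only a proper open subset and carries no transitive group action, so flatness of ${\bf X}_U$ must instead be extracted from the Cohen--Macaulay/regular dimension balance.
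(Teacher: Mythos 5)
Your proposal is correct and takes essentially the same route as the paper: the paper's proof also applies miracle flatness (citing Eisenbud, Thm.~18.16(b), where you cite Matsumura, Thm.~23.1) to the local rings $B=\cO_{f(x),U}$ and $A=\cO_{x,{\bf X}_U}$, using Cohen--Macaulayness of ${\bf X}_U$ from the preceding lemma, regularity of the base, and the dimension equality $\Kdim(A)=\Kdim(B)+\Kdim(A/A\fp)$ forced by the $1$-dimensional fibres. Your equidimensionality count merely fills in the verification of that dimension equality, which the paper asserts directly from $\dim(({\bf X}_U)_{\pi(x)})=1$.
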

\begin{proof}  
Let $x \in {\mathbf X}_U = f^{-1}(U)$. 
Let $B= \cO_{f(x),U}$ and $A=\cO_{x,{\mathbf X}_U }$. We must show that $A$ is a flat $B$-module.
Let $\fp$~denote the maximal ideal of~$B$.

Because ${\mathbf X}_U $ is Cohen-Macaulay,  $A$ is  a Cohen-Macaulay ring. 
Since $\dim(({\mathbf X}_U)_{\pi(x)})=1$, $\Kdim(A) = \Kdim(B)+\Kdim(A/A\fp)$.
Taken together, this equality, the fact that $A$ is Cohen-Macaulay, and 
\cite[Thm.~18.16(b)]{Ebud95}, imply that $A$ is a flat $B$-module.  
\end{proof}

\begin{corollary}
\label{thm.main.2}
Suppose char$(\Bbbk) \neq 2$.
If $\dim({\mathbf X}_R)=1$, then $\deg({\mathbf X}_R) = 20$, where $\mathbf X_R$ is viewed as
a subscheme of $\PP^5$ via the Pl\"ucker embedding
${\sf G}(2,V) \to \PP(\bwedge^2V) \cong \PP^5$.
\end{corollary}
\begin{proof}
Since  ${\mathbf X}_U $ is flat over $U$ and $\Bbbk$ is algebraically closed, 
the Hilbert polynomial of $X_R$, viewed as a closed subscheme of $\PP^5$,
is the same for all $R \in U$ \cite[Thm.~III.9.9]{Hart}. Since $\dim({\mathbf X}_R)=1$, the leading coefficient of its Hilbert polynomial is the 
degree of ${\mathbf X}_R$ as a closed subscheme of $\PP^5$. Thus, $\deg({\mathbf X}_R)$ is the same for all $R \in U$.
We therefore need only exhibit a single~$R$ for which $\deg({\mathbf X}_R) =20$; this is 
done in \S\ref{gsca} below.
\end{proof}

Modulo the example needed to finish the proof of  Corollary~\ref{thm.main.2}, 
the proof of Theorem~\ref{thm.main} is now complete.

\bigskip


\section{Examples}
\label{egs}

This section exhibits various examples of ${\mathbf X}_R$,  
three of which have dimension one. 
Unless otherwise stated, we suppose char$(\Bbbk) \neq 2$ in this section.

Our first example, in \S\ref{gsca} below, requires some Pl\"ucker-coordinate
calculations so we first introduce the notation we need for that discussion.

\subsection{Two Pl\"ucker embeddings}
\label{P.emb}
Let $\bwedge^2V$ denote the second exterior power of $V$.
Fix an ordered basis $\{x_1, \ldots , x_4\}$ for~$V$.
The image of  a point $Q= \Bbbk v_1 \oplus \Bbbk v_2$ in $\mathsf G(2,V)$ 
under the Pl\"ucker embedding is the point $(N_{12},N_{13},N_{14},N_{23},N_{24},N_{34}) \in \PP(\bwedge^2V)$  where $N_{jk}$ is the coefficient of 
$x_j \wedge x_k$ in $v_1 \wedge v_2$.
If $v_i = \sum_{j=1}^4 \l_{ij} x_j$, $i=1,2$, then  
\[
v_1 \wedge v_2 = 
\sum_{1 \leq j< k \leq 4}
 (\l_{1j}\, \l_{2 k} - 
        \l_{1 k}\, \l_{2 j}) \, x_j \wedge x_k
\]
so $N_{j k} = \l_{1j}\, \l_{2 k} - \l_{1 k}\, \l_{2 j}$. 
Hence, the $N_{jk}$ are the $2 \times 2$ minors of the matrix 
\[
N = 
\begin{pmatrix}
\l_{11} &  \l_{12} &  \l_{13} &  \l_{14} \\  
\l_{21} &  \l_{22} &  \l_{23} &  \l_{24}   
\end{pmatrix}.
\]
The image of~$Q$ in $ \PP(\bwedge^2V)$ does not depend on the choice of basis 
for~$Q$ because changing that basis corresponds to performing row operations 
on~$N$ and this corresponds to multiplying $v_1 \wedge v_2$ by the determinant 
of the corresponding change-of-basis matrix.

Since $\dim_\Bbbk(V)=4$, the map ${\sf G}(2,V) \to {\sf G}(2,V^*)$ given by
$$
Q \mapsto Q^\perp \; :=\; \{\a \in V^* \; | \; \a(Q)=0\}
$$
is an isomorphism. 
To make this explicit, we use the Pl\"ucker embedding ${\sf G}(2,V^*) \to \PP(\bwedge^2V^*)$ obtained by 
taking the basis for  $V^*$ that is dual to $\{x_1, \ldots, x_4\}$. The image of $Q^\perp$ in $ \PP(\bwedge^2V^*)$
is then the point $(M_{12}, M_{13}, M_{14}, M_{23}, M_{24}, M_{34})$ where   
\[ 
M_{ij} \; = \;  (-1)^{i+j+1}N_{k\ell} = (-1)^{k+\ell+1}N_{k\ell}
\]
whenever $i<j$, $k < \ell$, and  $\{i, j, k,\ell\} = \{1, 2, 3, 4\}$.

We write $\mathbf X_R^\perp$ for the image of ${\bf X}_R$ under the isomorphism ${\sf G}(2,V) \to {\sf G}(2,V^*)$.   
It is clear that $\deg(\mathbf X_R) = \deg(\mathbf X_R^\perp)$.

\subsubsection{}
\label{lscheme}
The scheme $\cL$ mentioned in the abstract is the scheme $\mathbf X_R^\perp$. 
 In that context, $\mathbf X_R^\perp$ is the natural 
scheme to consider, as it describes which lines in~$\PP^3$ correspond to 
line modules over the quadratic algebra.

\subsection{An example with $\mathbf X_R$ reduced and $\dim(\mathbf X_R) = 1$}
\label{gsca}
Let $\g \in \Bbbk^\times$, let $V = \bigoplus_{i=1}^4 \Bbbk x_i$, and let $R$~denote the linear span of the six elements 
\[
\begin{array}{lll}
x_4 \otimes x_1 - i x_1 \otimes x_4, \qquad & x_3\otimes x_3 - x_1\otimes x_1, \qquad &
x_3 \otimes x_1 - x_1 \otimes x_3 + x_2\otimes x_2,\\[2mm]
x_3 \otimes x_2 - i x_2 \otimes x_3, & x_4\otimes x_4 - x_2\otimes x_2, & 
x_4 \otimes x_2 - x_2 \otimes x_4 + \g x_1\otimes x_1,
\end{array}
\]
where $i\in \Bbbk$, $i^2 = -1$.  

We will show that $\dim(\mathbf X_R) = 1$ and $\deg({\bf X}_R)=20$.  
As explained in \S\ref{P.emb}, it suffices to prove these equalities for $\mathbf X_R^\perp$ and that is what we will do.

Let $S=\Bbbk[M_{12}, \ldots, M_{34}]$ denote the polynomial ring that is the
homogeneous coordinate ring of $\PP(\bwedge^2V^*)$. As explained 
in \cite[Appendix~A.2]{CV15}, $\mathbf X_R^\perp = \Proj(S/I)$
where $I$ is the ideal in~$S$ generated by the 46 polynomials listed in that appendix.
In \cite{CV15}, the degree of $\mathbf X_R^\perp$ is computed under the 
assumption that ${\rm char}(\Bbbk)=0$. Here we use a different method to 
compute its degree and only assume char$(\Bbbk) \neq 2$.

Let $h = M_{12} + M_{13} + i M_{14} + i M_{23} + M_{24}$ and  $H=\{h =0\} \subseteq \PP(\bwedge^2V^*)$.\footnote{The linear form $h$ is of no particular significance and is only chosen as it is to simplify the calculations.} 
Using the explicit description of the 46 polynomials generating $I$ that were mentioned above, and certain Gr\"obner basis calculations,
we will show that the scheme-theoretic intersection $H \cap \mathbf X_R^\perp$ has 
exactly 20~points, counted with multiplicity.
We can write $(H \cap \mathbf X_R^\perp)_{\rm red}=Y_a \cup Y_b \cup \cdots
\cup Y_f$ where $Y_a$ is the zero locus of the polynomials in (a) below, and so on:
\begin{enumerate}
\item[(a)] $M_{12}$, \ $M_{13}$, \ $M_{23}$, \ $M_{34} -1$, \ 
             $M_{14} - i M_{24}$, \ $M_{24}^3  - M_{24}^2 -1$;  \phantom{\Big)}
\item[(b)] $M_{13}$, \ $M_{24}$, \ $M_{23}-1$, \ $M_{12} + i M_{14}+i$, \ 
           $i(M_{14} + 1) M_{34} - M_{14}$,  \phantom{\Big)}\\ 
	   $2 M_{14}^4 + (6 - \g) M_{14}^3 + (9-2\g) M_{14}^2 + 
	   (6-\g)M_{14} + 2$; \phantom{\Big)}
\item[(c)] $M_{13}$, \ $M_{14}$, \ $M_{34}$, \ $M_{23} -1$, \ 
             $M_{12} + M_{24}+i$, \ 
	     $M_{24}^3 + 4i M_{24}^2 -4 M_{24} - 2i$; \phantom{\Big)}
\item[(d)] $M_{14}$, \ $M_{23}$, \ $M_{13} -1$, \ $M_{12} + M_{24}+1$, \ 
           $(M_{24} + 1) M_{34} + M_{24}$, \ $(M_{24} + 1)^4  + M_{24}^2$;  \phantom{\Big)}
\item[(e)] $M_{12}$, \ $M_{14}$, \ $M_{24}$, \ $M_{13} -1$, \ 
             $M_{23} - i$, \ $M_{34}^3  + M_{34} +\g$;  \phantom{\Big)}
\item[(f)] $M_{23}$, \ $M_{24}$, \ $M_{34}$, \ $M_{13} -1$, \ 
             $M_{12} + 1 + i M_{14}$, \ 
	     $2 M_{14}^3 - 4i M_{14}^2 + (\g - 3) M_{14} +i$.
\end{enumerate}


Let $J$ denote the ideal in $S$ generated by $I$ and $h$. 
Thus, $H \cap {\bf X}_R^\perp = \Proj(S/J)$.  
We first consider the subscheme of $H \cap {\bf X}_R^\perp$ supported on $Y_a$. 
Let $S_a'$ denote
the localization of~$S$ formed by inverting all homogeneous polynomials
that are nonzero at every point of~$Y_a$, and let $S_a$ denote the degree-0 component of the graded ring $S_a'$.
Let $J_a=JS_a' \cap S_a$. Since $M_{34}^{-1} \in S_a'$,  $S_a$ contains the
elements $m_{ij}:=M_{ij}M_{34}^{-1}$. 
For each $i$, $M_{i4}$ is nonzero on $Y_a$, so 
$m_{i4}^{-1} \in  S_a$. 
From the explicit list of polynomials in \cite[Appendix~A.2]{CV15} that generate~$I$, 
it follows that $J_a$~contains  
$$
i m_{23} m_{24} - m_{13}, \quad
i m_{23} m_{24} + m_{13} ,\quad \mbox{\rm and} \quad
m_{14}^2 - i m_{13} m_{14} + i m_{23} + m_{14} m_{23} -i m_{14} m_{24}.
$$
Since char$(\Bbbk) \neq 2$,  $J_a$ contains $m_{13}$ and $m_{23}$. 
It follows that $m_{14} - i m_{24} \in J_a$. Using the appendix of \cite{CV15} again, 
and the fact that $h\in J$,  
one sees that  $J_a$ is generated by $m_{24}^3 - m_{24}^2 - 1$.  Thus, $S_a/J_a = \Bbbk[m_{24}]/(m_{24}^3 - m_{24}^2 - 1)$.
In particular, $\dim_\Bbbk(S_a/J_a)=3$.

We now apply the same technique (and analogous notation) to case~(b).  
Since $M_{23}$ and $M_{14}$ are nonzero on $Y_b$, $m_{23}$ and $m_{14}$ are units in $S_b$. 
After consulting \cite[Appendix~A.2]{CV15} again, one sees that $J_b$ contains $m_{24}$ and $m_{13}$, and then that 
$m_{14} -i (m_{14} + 1)m_{34} \in J_b$.  Eventually one sees that $J_b$  is generated by 
$2 m_{14}^4 + (6 - \g) m_{14}^3 + (9-2\g) m_{14}^2 + (6-\g)m_{14} + 2$ and hence that $S_b/J_b$ is $\Bbbk[m_{14}]$ modulo the ideal generated by this element.
In particular, since char$(\Bbbk) \ne 2$, $\dim_\Bbbk(S_b/J_b)=4$. 

Similar arguments address the remaining cases (c)-(f). We omit the details.
Using \cite[\S3.2.4]{Chandler} one sees that the other four rings $S_c/J_c, \ldots,S_f/J_f$ are polynomial rings in one variable, $x$ say, modulo the ideals generated by
the elements $x^3 + 4ix^2 -4x -2i$,  $(1+x)^4 + x^2$,  $x^3 + x +\g$, and  $2 x^3 - 4i x^2 + (\g-3)x +i$,
respectively. 
Therefore
$$
\dim_\Bbbk(S_a/J_a) + \cdots + \dim_\Bbbk(S_f/J_f) \;=\; 3+4+3+4+3+3 \;=\; 20,
$$
so $H \cap \mathbf X_R^\perp$ has exactly 20 points when counted with multiplicity.
Hence $\deg(\mathbf X_R^\perp)=20$ and, by \S\ref{P.emb}, 
$\deg({\mathbf X}_R) = 20$. The 
proofs of Theorem~\ref{thm.main} and Corollary~\ref{thm.main.2} are now complete.

Combining this result with discussions in~\cite{CV15} and \cite[Chapter~3]{Chandler}, it follows that in this example $\mathbf X_R^\perp$  and $\mathbf X_R$~are reduced for generic $\gamma\in \Bbbk^\times$ (provided char$(\Bbbk)\ne 2$). In fact, the results in \cite{CV15} show that if char$(\Bbbk) = 0$ and $\gamma$ is generic then ${\mathbf X}_R$~is isomorphic to the union in~$\PP^5$ of
\begin{itemize}
\item a degree-four nonplanar elliptic curve contained in a~$\PP^3\subset \PP^5$
\item four planar elliptic curves, and
\item two non-singular conics.  
\end{itemize}

\subsection{An example with $\mathbf X_R$ nonreduced and $\dim(\mathbf X_R) = 1$}
\label{1pt}
Suppose char$(\Bbbk) = 0$. Let $V = \bigoplus_{i=1}^4 \Bbbk x_i$ as in \S\ref{gsca} and let $R$~denote the linear span 
of the six elements 
\[
\begin{array}{lcl}
x_1 \otimes x_2 - i x_2 \otimes x_1 - x_4\otimes x_4, & 
\qquad & x_2 \otimes x_3 - i x_3 \otimes x_2, \\[2mm]
x_1 \otimes x_3 - i x_3 \otimes x_1 - x_2\otimes x_2, &        & 
x_3 \otimes x_4 - i x_4 \otimes x_3, \\[2mm]
x_1 \otimes x_4 - i x_4 \otimes x_1 - x_3\otimes x_3, &        & 
x_4 \otimes x_2 - i x_2 \otimes x_4,
\end{array}
\]
where $i \in \Bbbk$, $i^2 = -1$. 

This example is
discussed briefly in~\cite[\S2.4]{SV2}, where it is noted that the
reduced variety of $\mathbf X_R^\perp$ is the union of a ``triangle'' and a curve in 
$\PP^5$. In particular, $\dim(\mathbf X_R) = 1$.   
We will show here that $\mathbf X_R$ is the union of a 
multiplicity-three triangle 
(i.e., three intersecting lines, each of multiplicity three) and a
reduced irreducible curve. 
As in \S\ref{gsca}, we examine $\mathbf X_R^\perp$ and use the 
Pl\"ucker coordinates $M_{12}, \ldots,  M_{34}$ of \S\ref{P.emb}, 
and write $S = \Bbbk[M_{12},\dots, M_{34}]$.

The method used to produce the defining polynomials of~$\mathbf X_R^\perp$ 
is given in~\cite{SV2}. Intersecting $\mathbf X_R^\perp$ with the hyperplane 
given by the zero locus of~$M_{23}$ verifies that the reduced variety 
of~$\mathbf X_R^\perp$ contains the triangle of points given by 
$M_{12}M_{13}M_{14} = 0$ and $M_{23} = M_{24} = M_{34} = 0$. 

Let $H$ denote the hyperplane given by the zero locus of the polynomial 
$h = M_{12} + M_{13} + M_{14}$, and let $J$ denote the ideal in $S$
generated by~$h$ and the polynomials that define $\mathbf X_R^\perp$. 
We first consider the set $Y_a$ of points of $\mathbf X_R^\perp \cap H$  
at which $M_{23}$ is nonzero.  We adopt notation similar to that used 
in \S\ref{gsca}. 
Thus, let $S_a'$ denote the localization of~$S$ formed by inverting all 
homogeneous polynomials that are nonzero at every point of~$Y_a$, and 
let $S_a$ denote the degree-0 component of the graded ring $S_a'$,
and let $J_a=JS_a' \cap S_a$. 
Computing a Gr\"obner basis (with, e.g., Wolfram's Mathematica) 
of $J_a$ shows that $S_a/J_a$ is isomorphic to a polynomial ring in 
one variable with exactly one relation~$g$, where $\deg(g) = 11$ and 
$g$~has no repeated factors.  This shows that $\mathbf X_R^\perp \cap H$  
has exactly eleven distinct points, each of multiplicity one, that have 
nonzero $M_{23}$-coordinate.

On the other hand, $\mathbf X_R^\perp \cap H$ contains exactly three 
distinct points at which $M_{23}$ is zero.  These points lie on the 
aforementioned triangle and are
\[
p_1 = (0, 1, -1, 0, 0, 0), \qquad p_2 = (1, 0, -1, 0, 0, 0),\qquad 
p_3 = (1, -1, 0, 0, 0, 0).
\]
Since $R$ is invariant under the map that cyclically permutes $x_2$, 
$x_3$ and $x_4$, there is a corresponding map on the Pl\"ucker 
coordinates that 
cyclically permutes $p_1$, $p_2$ and $p_3$. Thus, the three points 
have the same multiplicity, and so it suffices to find the 
multiplicity of $p_1$.
Analogous to our argument in \S\ref{gsca}, 
let $S_b'$ denote the localization of~$S$ formed by inverting all 
homogeneous polynomials that are nonzero on $p_1$, and let $S_b$ denote 
the degree-0 component of the graded ring $S_b'$, and let 
$J_b=JS_b' \cap S_b$. 
A computation of a Gr\"obner basis (with, e.g., Wolfram's Mathematica) 
yields an element of the form $(m_{13}-1)^3 f \in J_b$, where 
$m_{13} := -M_{13}M_{14}^{-1} \in S_b$ and $f$~is the image in $S_b$ of a 
polynomial that is nonzero on~$p_1$. Hence, $f$~is invertible in $S_b$ 
and so $(m_{13}-1)^3 \in J_b$. Using this data to 
compute another Gr\"obner basis for $J_b$ yields that $S_b/J_b \cong
\Bbbk[x]/((x-1)^3)$, and so $\dim(S_b/J_b) = 3$, 
and hence $p_1$~has multiplicity three. Therefore, as discussed above, 
$p_2$ and $p_3$ also each have multiplicity three.

Consequently, $\mathbf X_R^\perp$ (and hence $\mathbf X_R$) is not a reduced scheme.

In fact, by intersecting $\mathbf X_R^\perp$ with other hyperplanes (e.g., the zero 
locus of $M_{13} - M_{14}$), we find that each nonvertex point has
multiplicity three, and each vertex of the triangle has multiplicity
seven. It follows that this part of $\mathbf X_R^\perp$ is a triple triangle.

\subsection{An example with ${\mathbf X}_R$ a surface}
\label{Skly.example}
Let $\{\a_1,\a_2,\a_3\} \subseteq \Bbbk-\{0,\pm 1\}$ and suppose that $\a_1+\a_2+\a_3+\a_1\a_2\a_3=0$. 
Let $\{x_0,x_1,x_2,x_3\}$ be a basis for a 4-dimensional vector space $V$.
Let $E$~be the quartic elliptic curve in~$\PP^3$ given by the intersection of any two 
of the quadrics
 \begin{equation}
 \label{eqns.for.E}
 \begin{cases}
  x_0^2+x_1^2+x_2^2+x_3^2 \; = \; 0,     
  \\
  x_0^2 - \a_2\a_3 x_1^2 - \a_3 x_2^2+\a_2 x_3^2  \; = \; 0,
\\
x_0^2+\a_3 x_1^2-\a_1\a_3 x_2^2-\a_1 x_3^2  \; = \; 0,
\\
x_0^2-\a_2 x_1^2+\a_1 x_2^2-\a_1\a_2 x_3^2  \; = \; 0.
  \end{cases}
  \end{equation}
  Every quartic elliptic curve in~$\PP^3$ is isomorphic to at least one of these~$E$'s.

 To save space, we write $x_i x_j$ for $x_i \otimes x_j \in V\otimes V$. 
Let $R$ be the linear span of the 6 elements
\begin{equation}
\label{4-dim-Sklyanin}
x_0x_i - x_i x_0 -\a_i(x_jx_k+x_kx_j)
\quad \hbox{and} \quad x_0x_i + x_i x_0 -\a_i(x_jx_k-x_kx_j)
\end{equation}
where $(i,j,k)$ runs over the cyclic permutations of $(1,2,3)$.

By \cite{LS93,St_lin},  ${\mathbf X}_R$~is isomorphic to a $\PP^1$-bundle over~$E$. More 
precisely, 
${\mathbf X}_R$~consists of the lines in $\PP^3$ whose scheme-theoretic intersection 
with~$E$ has multiplicity~$\ge 2$, and 
hence~$=2$. In other words, ${\mathbf X}_R$~parametrizes the set of secant lines to~$E$. 
It is isomorphic to the second symmetric power $S^2E=E \times E/\sim$ where $(x,y) \sim(y,x)$.

\subsection{An example involving elliptic curves with $\dim\big({\mathbf X}_R\big)=1$}
\label{exotic.Skly.example}
Retain the notation in \S\ref{Skly.example}, and let $R'$~be the  linear  span of the 
six~elements
\begin{equation}
\label{4-dim-exotic-Sklyanin}
x_0x_i - x_i x_0 -\a_i(x_jx_k-x_kx_j)
\quad \hbox{and} \quad x_0x_i + x_i x_0 -\a_i(x_jx_k+x_kx_j)
\end{equation}
where $(i,j,k)$ is a cyclic permutation of $(1,2,3)$. 
By~\cite{CS2},    
$$
{\mathbf X}_{R'} \; = \;  \big(C_1  \; \sqcup \; C_2 \; \sqcup \; C_3  \; \sqcup \;   C_4\big)  \; \cup \; 
\big(E_1 \; \sqcup \; E_2 \; \sqcup \;E_3 \big),
$$
where 
\begin{enumerate}
\item 
the $C_i$'s~are disjoint smooth plane conics, 
  \item 
  the $E_j$'s~are degree-4 elliptic curves that span different 3-planes in~$\PP^5$, and 
  \item 
each $E_j$~is isomorphic to~$E/(\xi_j)$ 
where $(\xi_1),(\xi_2),(\xi_3)$ are the three order-2 subgroups of~$E$, and
\item
$|C_i \cap E_j|=2$ for all $(i,j) \in \{1,2,3,4\} \times \{1,2,3\}$.
\end{enumerate}

\subsection{The origin of the examples in \S\S\ref{Skly.example} and~\ref{exotic.Skly.example}}

 There is a point $\tau \in E$ such that the space~$R$ given by 
(\ref{4-dim-Sklyanin})  has the following description (see~\cite{St_lin}, for example). 

Let $\cL$ be a line bundle of degree four on~$E$, and let $V=H^0(E,\cL)$. 
Then $R$~consists of those sections of the line bundle $\cL\boxtimes\cL$ on $E\times E$ whose divisor of zeros is of the form
\begin{equation*}
D+  \{(x,x+2\tau)\; |\;  x\in E\},
\end{equation*}
where $D$ is a divisor invariant under the involution $(x,y)\mapsto (y+2\tau,x-2\tau)$ such that 
\begin{equation*}
  \{(x,x-2\tau)\}
\end{equation*}
(the fixed-point set of the involution) occurs in $D$ with even multiplicity. 

The quotient $TV/(R)$ of the tensor algebra on $V$ modulo the ideal generated by~$R$ is 
a characteristic-free construction of the Sklyanin algebras (on four generators) 
of~\cite{OF89,Skl82,Skl83}, introduced by Sklyanin to study certain solutions to the 
Yang-Baxter equation. By~\cite{SS92}, the algebras $TV/(R)$~form a flat family of 
deformations of the symmetric algebra~$SV$, the polynomial ring on four variables,  with 
the data $(E,~\tau)$ acting as deformation parameters.

The algebras~$TV/(R')$ can be obtained from the Sklyanin algebras~$TV/(R)$ by means of
a cocycle twist construction, so they too have a geometric construction involving an
elliptic curve. We refer to~\cite{CS1,CS2} for details and the description of~${\mathbf X}_{R'}$. 

%

\bigskip
\medskip


\section{Motivation}
\label{sect.motive}
 
\subsection{Non-commutative algebraic geometry }
\label{ssect.nag}
Let $V$ be a 4-dimensional vector space. The symmetric algebra on $V$, $SV$, is a polynomial ring on four variables.
Thus $\Proj(SV) = \PP(V^*) \cong \PP^3$. In~\cite{FAC}, Serre proved that 
a certain quotient of the category of graded $SV$-modules is equivalent 
to the category, $\Qcoh(\PP^3)$, of quasi-coherent sheaves on~$\PP^3$. 

Since $SV=TV/({\rm Alt})$, where ${\rm Alt}$ is the subspace of $V \otimes V$
consisting of all skew-symmetric tensors, one might be led to perform the same quotient-category 
construction on other algebras $A=TV/(R)$ when $R$ is {\it any}\/ 6-dimensional subspace 
of $V\otimes V$ and, if $A$ is ``good'', meaning that it has the properties in the first paragraph of \S\ref{sect.line.scheme.general}, one might hope that this quotient category behaves
``like'' $\Qcoh(\PP^3)$. This hope is a reality in a surprising number of cases and has  
led to a rich subject
that goes by the name of non-commutative algebraic geometry --- 
see~\cite{MSRI2013,StVdB01} and the references therein.

From now on, we will assume that $A$ is ``good'' in the above sense. 

We will denote the appropriate quotient of the category of graded $A$-modules by ${\sf QGr}(A)$.
We will write $\Proj_{nc}(A)$ for the ``non-commutative analogue of $\PP^3$'' that has $A$ as its homogeneous coordinate
ring. This is a fictional object that is made manifest by declaring the category of quasi-coherent sheaves on $\Proj_{nc}(A)$
to be ${\sf QGr}(A)$.

\subsection{Points, lines, etc.}
The most elementary geometric features of $\PP^3$ are points, lines, planes, quadrics, and their incidence relations. 
There are non-commutative analogues of these and a first investigation of $\Proj_{nc}(A)$ involves finding its ``points''
and ``lines'' and the ``incidence relations'' among them. 

A non-commutative ring usually has far fewer two-sided ideals than a commutative
ring. Even when $A$ is ``good'',
it has far fewer two-sided ideals than $SV$. As a consequence, 
$\Proj_{nc}(A)$ usually has far fewer 
``points'' and ``lines'' than does~$\PP^3$. 

For example, $\Proj_{nc}(A)$ can have as few as twenty ``points'', which is the case 
for~$TV/(R)$ when $R$~is the space in~\S\ref{gsca} and $\g^2 \neq 4$ (\cite{CV15}), and is
the case for~$TV/(R')$ when $R'$~is the space in \S\ref{exotic.Skly.example} (\cite{CS1}),
or even just one point with multiplicity~20 as in~\S\ref{1pt} (\cite{SV99}). There
are many examples in the literature where the number of ``points'' is infinite, such
as the algebras in~\cite{S94} that are related to the Sklyanin algebra on four
generators (the latter being the algebras in~\S\ref{Skly.example}); 
moreover, the scheme~$\mathbf X_R$ of the generic member of the family of algebras 
in~\cite{S94} has dimension one.  Hence, 
the ``points'' in $\Proj_{nc}(A)$ form a scheme that is called the {\sf point scheme}. 
When $A$~is ``good'', the point scheme is a closed subscheme of~$\PP^3$; and if it 
has dimension zero, then its degree is~20 (cf.~\cite[p.~377]{VMSRI2013}). 

By~\cite{SV1,SV2}, there is also a  {\sf line scheme}  that classifies the
``lines'' in~$\Proj_{nc}(A)$. By~\cite{SV1},
the line scheme is isomorphic to ${\mathbf X}_R \subseteq {\sf G}(2,V)$ and always has dimension~$\ge 1$.  
Thus, the main result in this paper says that if $A$~is ``good'' and the line scheme has 
dimension 1, then its degree is 20.

\subsection{Rank-2 elements in $R$}
As before, let $R$~denote a 6-dimensional subspace of~$V \otimes V$.

The rank of an element $t \in V \otimes V$ is the smallest~$n$ such that 
$t=v_1 \otimes w_1 + \cdots + v_n \otimes w_n$ for some $v_i,w_i \in V$. The elements 
in~$V\otimes V$ of rank $\le r$ form a closed subvariety of $V \otimes V$. This subvariety
is a union of 1-dimensional subspaces, so the lines through 0 and the nonzero elements of 
rank $\le r$ form a closed subvariety $T_{\le r}$ of $\PP(V\otimes V)$. We write $T_r$ 
for $T_{\le r} - T_{\le r-1}$. Define ${\bf Z}_R$ to be the scheme-theoretic intersection
$$
{\bf Z}_R\; := \; T_2 \cap \PP(R),
$$
and define ${\bf Y}_R$ to be the scheme-theoretic intersection
$$
{\bf Y}_R \; := \; {\bf S}_R \cap {\sf G}'(2,V)
$$
in~${\sf G}(8,V^{\otimes 2})$, where ${\sf G}'(2,V)$~is the image of the closed immersion 
of~${\sf G}(2,V)$ in~${\sf G}(8,V^{\otimes 2})$ given by 
$Q \mapsto V \otimes Q$. Thus, ${\bf Y}_R={\mathbf X}_{\s(R)}$, where 
$\s:V\otimes V \to V\otimes V$ is the linear map $\s(u \otimes v)=v \otimes u$.

If  $t \in {\bf Z}_R$ and $t=a \otimes b+c \otimes d$, then the 2-dimensional subspace of $V$ spanned by $a$ and $b$ 
depends only on $t$ and not on its representation as  $a \otimes b+c \otimes d$. Thus, there is a well-defined morphism $\phi:{\bf Z}_R \to {\sf G}(2,V)$ given by 
$\phi(a \otimes b+c \otimes d) := \Bbbk a\oplus \Bbbk c$, and the image of~$\phi$ is~${\mathbf X}_R$. 
Similarly, there is a morphism  ${\bf Z}_R \to {\sf G}(2,V)$ given by
$a \otimes b+c \otimes d \mapsto \Bbbk b\oplus \Bbbk d$.  Thus, we have morphisms
$$
\xymatrix{
& {\bf Z}_R \ar[dl]_\phi \ar[dr] & 
\\
{\mathbf X}_{R} &&  {\bf Y}_{R} 
}
$$
When $TV/(R)$ is ``good'', the morphism~$\phi$ is an isomorphism; in particular, our
map~$\phi$ is the same as the morphism~$\phi$ in~\cite[Lemma 2.5]{SV1}, where it is 
proved to be an isomorphism (but our other notation differs from that in loc.\ cit.). 

\bigskip

It is suggested that the interested reader consult~\cite{MSRI2013,StVdB01} 
for further
details and references on studying non-commutative algebra via geometric techniques.

\bigskip


\bibliographystyle{plain}

\def\cprime{$'$} \def\cprime{$'$}

\bigskip
\bigskip

\end{document}